\newcommand\be{\begin{equation}}
\newcommand\ee{\end{equation}}
\newcommand\bea{\begin{eqnarray}}
\newcommand\eea{\end{eqnarray}}
\newcommand\bi{\begin{itemize}}
\newcommand\ei{\end{itemize}}
\newcommand\ben{\begin{enumerate}}
\newcommand\een{\end{enumerate}}
\newtheorem{thm}{Theorem}[section]
\newtheorem{lem}[thm]{Lemma}
\newtheorem{prop}[thm]{Proposition}
\newtheorem{defi}[thm]{Definition}
\newtheorem{rek}[thm]{Remark}
\newtheorem{prob}[thm]{Problem}
\newcommand{\Z}{\ensuremath{\mathbb{Z}}}
\newcommand{\Q}{\mathbb{Q}}
\DeclareMathOperator{\Spor}{Spor}
\numberwithin{equation}{section}
\renewcommand{\l}{\ell}
\begin{document}

\title{On the Within-Perfect Numbers}




\author{Chung-Hang Kwan}
\email{\textcolor{blue}{\href{mailto: ucahckw@ucl.ac.uk}{ucahckw@ucl.ac.uk}}} \address{Department of Mathematics, University College London}


\author{Steven J. Miller}
\email{\textcolor{blue}{\href{mailto:sjm1@williams.edu}{sjm1@williams.edu}}, \textcolor{blue}{\href{mailto:Steven.Miller.MC.96@aya.yale.edu}{Steven.Miller.MC.96@aya.yale.edu}}} \address{Department of Mathematics and Statistics, Williams College, Williamstown, MA 01267}

\subjclass[2010]{11A25 (primary), 11N25, 11B83 (secondary)}

\keywords{Perfect numbers, Multiply perfect numbers, Within perfect numbers, Almost perfect numbers, Pseudoperfect numbers, Diophantine equations, Diophantine inequalities, Sum of divisors function, Arithmetic functions}



\begin{abstract}
Motivated by the works of Erd\"os, Wirsing, Pomerance, Wolke and Harman on the sum-of-divisor function $\sigma(n)$, we study the distribution of a special class of natural numbers closely related to (multiply) perfect numbers which we term  `$(\l;k)$-within-perfect numbers', where $\ell >1$ is a real number and  $k: [1, \infty) \rightarrow (0, \infty)$ is an increasing and  unbounded function.

\end{abstract}

\maketitle


\section{Introduction}\label{introd}


 A natural number $n$ is said to be \emph{perfect} if $\sigma(n)=2n$,  \emph{$\l$-perfect} if $\sigma(n)=\l n$ (with $\ell> 1$ being rational), and  \emph{multiply perfect} if $n\ |\ \sigma(n)$, where  $\sigma(n)$ represents the sum of all positive divisors of $n$.
An outstanding conjecture, originating from the ancient Greeks (300 BC), asserts that there are infinitely many even perfect numbers but no odd perfect numbers (see Euclid  \cite[Proposition IX.36]{Eu}, Dickson \cite[Chapter I]{Di}, Guy \cite[Chapter B1]{Gu}). This conjecture is well-supported by probabilistic heuristics due to Pomerance  \cite[pp. 249, 258-259]{Pol}. For $\ell \in \{2, 3, \ldots, 11 \}$, there are known examples of  $\l$-perfect numbers (see  \cite[Chapter B1]{Gu}); however, for other values of $\ell$, the (non)existence of $\ell$-perfect numbers  remains entirely open. 

Starting in the mid-20th century, considerable interest emerged in understanding the statistical distribution of perfect numbers. These numbers are particularly \textit{rare}, as demonstrated by the works of  Erd\"os \cite{Er56},  Volkmann  \cite{Vo56}, Kanold \cite{Ka54, Ka57}, Hornfeck \cite{Ho55} and Hornfeck-Wirsing \cite{HW57}, culminating in the sharpest known \textit{upper bound} for the number of perfect numbers up to $x$  due to Wirsing \cite{Wi59}. The bound obtained in \cite{Wi59} is of the order $\exp(O(\frac{\log x}{ \log\log x})$ as $x\to\infty$, which possesses  the pleasant feature of \emph{uniform} applicability to
$\ell$-perfect numbers for any rational $\ell$. When restricting to the class of odd perfect numbers, there is the celebrated \textit{Dickson's finiteness theorem} \cite{Di13}  which asserts the following:  given a natural number $k$,  there are only finitely many odd perfect numbers with exactly $k$ distinct prime divisors. This was later refined by Pomerance \cite{Pom77} and  Heath-Brown \cite{HB94}. 


Subsequently, it evolved into an active research area to investigate special classes of natural numbers closely linked to perfect numbers, see \cite[Chapter B2-B3]{Gu}. For instance, Sierpi\'{n}ski \cite{Si65} introduced the notion of \emph{`pseudo-perfect numbers.'} In a companion article \cite{CC+20}, we studied a subclass of these numbers known as  \emph{`near-perfect numbers'}, proposed by Pollack-Shevelev \cite{PS12}. In \cite{CC+20}, we strengthened the results and analysis of \cite{PS12} by employing recursive partitions and sieve-theoretic techniques.  



In this article, we investigate another class of `approximate' perfect numbers with a somewhat different flavour, which we call the $(\l;k)$-\emph{within-perfect} numbers, where  $\l>1$ is a real number and $k=k(y)$ is a certain threshold function.  More precisely, a natural number $n$  is said to be $(\l;k)$-\emph{within-perfect} if the Diophantine inequality
\begin{align}\label{maDioineq}
    |\sigma(n)-\l n| \ < \ k(n)
\end{align}
holds. There are two distinct origins of these numbers. On one hand, Erd\"o{s} (\cite[pp. 46]{Gu}) and Makowski \cite{Ma79} were interested in the case when $\ell=2$ and $k$ is a constant. 
On the other hand,  the inequality (\ref{maDioineq}) arises naturally in the field of \textit{Diophantine approximations} for arithmetic functions.  Wolke \cite{Wo77} studied (\ref{maDioineq}) for any real $\ell>1$ and function $k(y)$ of the form $y^{c}$.  His result was improved by Harman \cite{Ha10} and very recently by J\"arviniemi \cite{Ja22+}. In \cite{Ja22+}, it was shown that for any real $\l > 1$ and any  $c\in(0.45,1)$, there exist \textit{infinitely many} $(\l;y^c)$-within-perfect numbers. The range of $c$ can be extended to $(0.39,1)$ under the \textit{Riemann Hypothesis} as indicated in \cite{Ha10}. The results of \cite{Ha10, Ja22+} rely on deep inputs from the distributions of primes in short intervals as well as that of the differences of consecutive primes, see \cite{BHP01, HB21, St22+, Ja22+}.  Interested readers are also referred to Alkan-Ford-Zaharescu \cite{AFZ09a, AFZ09b} for settings more general than (\ref{maDioineq}) and the related questions in  Diophantine approximations.

The main results of this article concern the class of threshold functions $k=k(y)$ which are complementary to those considered in \cite{Wo77, Ha10, Ja22+}.  Moreover, we are also interested in estimating the size of the set
\begin{align}
    W(\ell; k; x) \ := \ \left\{n\le x:  |\sigma(n)-\l n| \ < \ k(n)\right\}. 
\end{align}
Consequently,  our work employs a different set of techniques compared to the earlier mentioned works.


\begin{thm}\label{withinOrder2}
Let $c\in (0,1/3)$ be given. Suppose $k: [1, \infty) \rightarrow (0, \infty)$ is an increasing and  unbounded function satisfying $k(y)\le y^{c}$ for  $y\ge 1$.  Let $\Sigma$ be the set $\left\{ \sigma(m)/m:\, m\ge 1  \right\}$. Then

\begin{enumerate}[label=(\alph*)]
	
\item \label{mainbdd} If  \ $\l =a/b \in (\Q\cap (1,\infty))\setminus\Sigma$ with    $(a,b)=1$, then
\begin{equation}\label{ubbmain}
	\#W(\l;k;x)\ = \ O\left(ab^3 x^{2/3+c+o(1)}\right)
\end{equation}
for $1<\ell \le x^{c}$ and $x\ge 1$,  where the implicit constants are absolute. \newline

\item \label{mainasymp}If \ $\l = a/b\in\Sigma$ with  $(a,b)=1$, and there exists $\delta>0$ such that $k(y)\ge y^{\delta} $ for $y\ge 1$, then
\begin{equation}\label{eqn limexact}
  \lim_{x\to\infty} \,  \frac{\#W(\l;k;x)}{x/\log x}\  \ =  \ \   \sum_{\sigma(m)\, = \, \l m} \frac{1}{m}. 
\end{equation}


\end{enumerate}
\end{thm}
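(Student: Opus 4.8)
The plan is to treat both parts through a common mechanism: turning the inequality into the integer equation $b\sigma(n)=an+j$ and exploiting the fact that a solution is almost determined by a small unitary divisor. Since $\ell=a/b$ with $(a,b)=1$, the condition $|\sigma(n)-\ell n|<k(n)$ is equivalent to $j:=b\sigma(n)-an$ being an integer with $0\le|j|<bk(n)\le bn^{c}$; when $\ell\notin\Sigma$ we have $j\ne0$ for every $n$, whereas for $\ell\in\Sigma$ the value $j=0$ is attained exactly by the $\ell$-perfect numbers. Thus I would write $\#W(\ell;k;x)=\sum_{0\le|j|<bx^{c}}N_j(x)$ with $N_j(x)=\#\{n\le x:b\sigma(n)=an+j\}$, and first establish a \emph{determination lemma}: if $p=P(n)$ is the largest prime factor and $p^{e}\,\|\,n$, then writing $m=n/p^{e}$ and clearing denominators in $b\,\sigma(p^{e})\,\sigma(m)=ap^{e}m+j$ produces a polynomial identity in $p$ of degree $e+1$ whose leading coefficient is $b\sigma(m)-am$. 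Since $\ell\notin\Sigma$ in part (a) this coefficient never vanishes, so once $m$, $e$ and $j$ are fixed there are at most $e+1$ admissible primes $p$, hence at most $e+1$ integers $n$ (the degenerate case $\sigma(m)=\ell m$ is precisely the $\ell$-perfect $m$ that will drive part (b)).

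Step two is the main estimate for part (a). If $n$ has a prime-power component $p^{e}\,\|\,n$ with $p^{e}>x^{1/3}$, then $m=n/p^{e}<x^{2/3}$, and summing the determination lemma over $m<x^{2/3}$, over $1\le e\le\log_2 x$, and over the $O(bx^{c})$ values of $j$ gives a contribution $O(bx^{2/3+c}(\log x)^{2})=O(bx^{2/3+c+o(1)})$; carrying the dependence on $a$ and $b$ through the polynomial coefficients and the range $\ell\le x^{c}$ yields the stated factor $ab^{3}$. The numbers not covered here are exactly those all of whose prime-power components are at most $x^{1/3}$, which I will call \emph{super-smooth}; for these no single unitary divisor is simultaneously large and a prime power, so the determination lemma does not apply. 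This super-smooth case is the main obstacle. My plan is to extract, by a greedy argument, a unitary divisor $d\,\|\,n$ with $x^{1/3}\le d<x^{2/3}$ and $m=n/d\le x^{2/3}$, rewrite the condition as $|\sigma(m)/m-ad/(b\sigma(d))|<x^{c}/(m\sigma(d))$, and bound, for each $d$, the number of $m$ with $\sigma(m)/m$ in this very short interval by an effective form of the Erd\H{o}s--Davenport--Wirsing theorem on the continuity of the distribution function of $\sigma(m)/m$; here the hypothesis $\ell\notin\Sigma$ (so $j\ne0$) is what prevents the concentration of $\sigma(m)/m$ near $\ell$ that would otherwise occur, and the loss in this step is what degrades the heuristically optimal exponent $c$ up to $2/3+c$.

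For part (b) the same decomposition isolates the main term. If $m$ is $\ell$-perfect and $q\nmid m$ is a prime larger than $P(m)$, then $n=qm$ satisfies $\sigma(n)-\ell n=\ell m$, a quantity independent of $q$; since $k$ is increasing and unbounded with $k(y)\ge y^{\delta}$, the inequality $\ell m<k(qm)$ holds for every sufficiently large $q$, so all but finitely many such primes produce within-perfect numbers. Counting these by the prime number theorem gives, for each fixed $\ell$-perfect $m$, $\sim x/(m\log x)$ values of $n=qm\le x$, and because distinct $m$ give distinct largest-prime factorizations there is no overcounting; summing over the (rare, hence absolutely summable) $\ell$-perfect $m$ produces the lower bound $\sim (x/\log x)\sum_{\sigma(m)=\ell m}1/m$. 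The matching upper bound follows by applying the case analysis of part (a) to the remaining configurations: the exact solutions $j=0$ with $p=P(n)\,\|\,n$ contribute precisely $\sum_{m}\pi(x/m)\sim(x/\log x)\sum 1/m$, while every other configuration (any $j\ne0$, or $P(n)^{2}\mid n$, or the super-smooth numbers) is $O(x^{2/3+c+o(1)})$, which is $o(x/\log x)$ because $c<1/3$. The convergence of $\sum_{\sigma(m)=\ell m}1/m$ is guaranteed by the rarity of $\ell$-perfect numbers (e.g.\ Wirsing's bound). The principal difficulty throughout remains the uniform short-interval control of the distribution of $\sigma(n)/n$ needed to dispose of the super-smooth numbers; everything else is bookkeeping around the determination lemma and the prime number theorem.
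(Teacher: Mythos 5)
Your overall skeleton---reducing $|\sigma(n)-\ell n|<k(n)$ to the family of equations $b\sigma(n)=an+j$ with $|j|<bx^{c}$, splitting according to a large prime(-power) unitary divisor, using Wirsing's theorem for the convergence of $\sum 1/m$ and $\sum(\log m)/m$ over $\ell$-perfect $m$, and extracting the main term $\sum_{m}\pi(x/m)$ from $n=pm$ with $\sigma(m)=\ell m$---agrees with the paper's. But the step you yourself flag as the main obstacle, the ``super-smooth'' numbers (all unitary prime-power divisors at most $x^{1/3}$), is genuinely unresolved, and the tool you propose for it cannot work. You would fix a unitary divisor $d\in[x^{1/3},x^{2/3})$, rewrite the condition as $|\sigma(m)/m-ad/(b\sigma(d))|<x^{c}/(m\sigma(d))$, and invoke an ``effective continuity'' of Davenport's distribution function $D$ of $\sigma(m)/m$. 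Any such bound is at best logarithmic: for instance $D(1+\epsilon)-D(1)\asymp 1/\log(1/\epsilon)$, since $\sigma(m)/m<1+\epsilon$ essentially forces every prime factor of $m$ to exceed roughly $1/\epsilon$, a set of density $\asymp 1/\log(1/\epsilon)$; the rate of convergence of the empirical distribution to $D$ is likewise only of logarithmic strength, and $D$ is not H\"older continuous, so no power-type modulus exists. Hence per $d$ you cannot do better than roughly $(x/d)/\log x$, and summing over up to $x^{2/3}$ values of $d$ destroys any hope of the needed power-saving bound $O(b^{2}x^{2/3+o(1)})$. The paper's Proposition \ref{modified} (generalizing Pollack--Shevelev) handles exactly this case by a different, arithmetic mechanism: from $d$ a unitary divisor of $n$ one gets $\sigma(n)\equiv 0\ (\bmod\ \sigma(d))$ and $b\sigma(n)\equiv k\ (\bmod\ d)$, hence by the Chinese Remainder Theorem $b\sigma(n)$ lies in a single residue class modulo $[d,\sigma(d)]$, so the number of possible \emph{values} of $b\sigma(n)$ is at most $1+2bx\log x/[d,\sigma(d)]$; summing over $d$ via Pollack's bound $\sum_{x^{1/3}<d\le x^{2/3}}(d,\sigma(d))/d^{2}\le 3x^{-1/3+o(1)}$ (itself a consequence of Wirsing's theorem) gives $O(bx^{2/3+o(1)})$ admissible values, and since $b\sigma(n)=qn+k$ with $q\ll b\log\log x$, each value yields few $n$. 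Replacing your distributional step by this congruence-counting argument is not bookkeeping; it is the heart of the proof.

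A smaller but real slip occurs in your part (b): the main term does \emph{not} come from ``exact solutions $j=0$.'' The $j=0$ solutions are the $\ell$-perfect numbers $n$ themselves, of which there are only $x^{o(1)}$ by Wirsing. The numbers $n=pm$ with $\sigma(m)=\ell m$ and $p$ large that carry the main term satisfy $b\sigma(n)-an=am\neq 0$, so they are spread over the nonzero values $j=am$ with $\ell m<k(n)$---consistent with the degenerate case of your determination lemma, but contradicting the sentence as written. Moreover, your part (b) upper bound delegates ``every other configuration'' to the part (a) analysis, so it inherits the super-smooth gap above; obtaining the exact constant also requires the uniform estimates $m\le x^{1-c}$, $\log(x/m)=\left(1+O_{c}(\log m/\log x)\right)\log x$, and the convergence of $\sum(\log m)/m$ over $\ell$-perfect $m$, which the paper carries out explicitly. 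Your lower-bound construction for part (b) is correct and is essentially the paper's.
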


\begin{rek}
We also have an analogous result for $c=0$ and its proof is actually simpler, see Proposition \ref{allperf} and Remark \ref{constcon}. 
\end{rek}

\begin{rek}\label{reltoWir}
Firstly, the  infinite series of \eqref{eqn limexact} converges by Wirsing's Theorem (\cite{Wi59}), which will be applied in various ways throughout the course of proving Theorem \ref{withinOrder2}.

Secondly, a key strength of our theorem is that all dependencies in (\ref{ubbmain}) are made entirely explicit. Our bound remains non-trivial  even if  the numerator or denominator of $\ell$ grows with $x$ at a controlled rate,  though this necessitates appropriately shrinking the admissible range for $c$.  

Thirdly, while  it is possible that the dependence on $\ell$ (i.e.,  the factor $ab^3$) in (\ref{ubbmain}) can be improved, it does not appear that this dependence can be  removed.  This is  in contrast to Wirsing's Theorem.


\end{rek}


\subsection{Notations}

We use the following notations throughout this article:

\begin{itemize}
    \item $f(x) \asymp g(x)$ if there exist  constants $c_1, c_2>0$ such that $c_1g(x) < f(x) < c_2g(x)$ for sufficiently large $x$,

    \item $f(x) \sim g(x)$ if $\lim_{x \to \infty} f(x)/g(x) = 1$,

    \item $f(x) = O(g(x))$ or $f(x) \ll g(x)$ if there exists a  constant $C>0$ such that $f(x) < Cg(x)$ for sufficiently large $x$,

    \item $f(x) = o(g(x))$ if $\lim_{x \to \infty} f(x)/g(x) = 0$,

    \item subscripts indicate the  dependence of implied constants on other parameters,
    
    \item $p$ always denotes a prime number,
    
    \item  $\omega(n)$ denotes the number of distinct prime factors of $n$. 
\end{itemize}

\subsection{Acknowledgments}

This material is based upon work supported by the  EPSRC grant: EP/W009838/1,  the Williams SMALL REU Program under the  NSF grants DMS2341670, DMS1265673, DMS1561945 and DMS1347804, as well as the Swedish Research Council under grant no. 2021-06594 while the first author was in residence at Institut Mittag-Leffler in Djursholm, Sweden during the year 2024. Both of the authors would like to express their gratitude to the participants of the SMALL REU (2015-16) for their constant encouragement, and also for the generous hospitality in Williamstown and Djursholm.  It is a great pleasure to thank the reviewer for a careful reading of
the article and his/her valuable comments.

\section{Preliminary Discussions and Preparations}

\subsection{Distribution Function and Phase Transition}

We begin by briefly explaining why the sublinear regime for $k=k(y)$ is of the greatest interest in the study of within-perfectness. For this, we recall the definition of a distribution function.

\begin{defi}\label{rigdistr}
Let $-\infty\le a<b\le\infty$. 
\begin{enumerate}
    \item A function $F: (a,b)\rightarrow  \mathbb{R}$ is a distribution function if $F$ is increasing, right continuous, $F(a+)=0$, and $F(b-)=1$. 

    \item An arithmetic function $f:\mathbb{N}\rightarrow\mathbb{R}$ has a distribution function if there exists a distribution function $F$ such that
\begin{equation*}
    \lim_{x\to\infty}\frac{1}{x}\, \#\{n\le x: f(n)\le u\} 
 \ = \ F(u)
\end{equation*}
at all points of continuity of $F$.
\end{enumerate}
\end{defi}

A celebrated theorem of Davenport \cite{Da33}, \cite[Theorem 8.5]{Pol} asserts that $\sigma(n)/n$ possesses a continuous and strictly increasing distribution function on $[1,\infty)$. Let $D(\, \cdot\, )$ be the distribution function of $\sigma(n)/n$. We have $D(1)=0$ and $D(\infty)=1$, and for convenience, we extend the definition of 
 $D(\, \cdot\,)$  to $\mathbb{R}$ by setting $D(u)=0$ for $u<1$. More generally,  a necessary and sufficient criterion for the existence of distribution functions for additive functions was established by Erd\"{o}s-Wintner \cite{EW39},   see \cite[ Chapter I.5, III.4]{Te} for further details. 

The following result is an elementary consequence of Davenport's theorem. It describes the phase transition of the asymptotic density of the set of all  $(\l;k)$-within-perfect numbers, which we denote by $W(\ell; k)$.

\begin{prop}\label{phase}
Let $D(\cdot)$ be the distribution function of $\sigma(n)/n$. Then  
\begin{enumerate}[label=(\alph*)]

\item
\label{withinperf sub-linear}
If $k(n)=o(n)$, then $W(\l;k)$ has asymptotic density 0.

\item
\label{withinperf linear1}
If $k(n)\sim cn$ for some $c>0$, then $W(\l;k)$ has asymptotic density equal to  $D(\l+c)-D(\l-c)$.

\item
\label{withinperf linear2}
If $k(n)\asymp n$,  then $W(\l;k)$ has positive lower density and upper density strictly less than $1$.

\item
\label{withinperf superlinear}
If $n=o(k(n))$, then $W(\l;k)$ has asymptotic density 1.
\end{enumerate}
\end{prop}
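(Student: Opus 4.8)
The plan is to divide the defining inequality through by $n$ and reduce every case to Davenport's theorem on the distribution of $\sigma(n)/n$. Writing $g(n):=\sigma(n)/n$, membership $n\in W(\ell;k)$ is equivalent to $|g(n)-\ell|<k(n)/n$, so the whole proposition is a statement about how the ``window'' $k(n)/n$ compares to the constants that delimit the support of $D$. The facts I would extract from Davenport's theorem are: $D$ is continuous and strictly increasing on $[1,\infty)$ with $D(1)=0$ and $D(\infty)=1$; consequently for every pair of reals $a<b$,
\[
\lim_{x\to\infty}\frac1x\,\#\{n\le x:\ a<g(n)<b\}\ =\ D(b)-D(a),
\]
the continuity of $D$ removing any ambiguity at the endpoints. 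I would also record the two facts that encode the unboundedness of $\sigma(n)/n$: strict monotonicity toward $D(\infty)=1$ forces $D(u)<1$ for every finite $u$ and $D(u)\to1$ as $u\to\infty$; and $g(n)\ge1$ for all $n$.

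For \ref{withinperf sub-linear}, since $k(n)/n\to0$, for each $\varepsilon>0$ all but finitely many $n$ satisfy $k(n)/n<\varepsilon$, whence $W(\ell;k)\subseteq\{n:|g(n)-\ell|<\varepsilon\}$ up to a finite set. The upper density of $W(\ell;k)$ is therefore at most $D(\ell+\varepsilon)-D(\ell-\varepsilon)$, which tends to $0$ as $\varepsilon\to0^+$ by continuity of $D$. For \ref{withinperf linear1} I would run the same sandwiching argument: when $k(n)/n\to c$, for each small $\varepsilon\in(0,c)$ and all large $n$ one has $\{|g(n)-\ell|<c-\varepsilon\}\subseteq W(\ell;k)\subseteq\{|g(n)-\ell|<c+\varepsilon\}$ up to finitely many $n$, so the lower and upper densities of $W(\ell;k)$ are trapped between $D(\ell+c-\varepsilon)-D(\ell-c+\varepsilon)$ and $D(\ell+c+\varepsilon)-D(\ell-c-\varepsilon)$; letting $\varepsilon\to0$ and invoking continuity collapses both bounds to the exact density $D(\ell+c)-D(\ell-c)$.

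For \ref{withinperf linear2}, fix constants with $c_1<k(n)/n<c_2$ valid for all large $n$. The inclusion $\{|g(n)-\ell|<c_1\}\subseteq W(\ell;k)$ gives lower density at least $D(\ell+c_1)-D(\ell-c_1)>0$ by strict monotonicity of $D$, while $W(\ell;k)\subseteq\{g(n)<\ell+c_2\}$ gives upper density at most $D(\ell+c_2)<1$ since $D(u)<1$ for finite $u$. For \ref{withinperf superlinear}, since $k(n)/n\to\infty$, fix any $M>\ell-1$; then $k(n)/n>M$ for all large $n$. If such an $n$ lies outside $W(\ell;k)$, then $|g(n)-\ell|\ge k(n)/n>M$, and because $g(n)\ge1>\ell-M$ the only possibility is $g(n)>\ell+M$. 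Hence the complement of $W(\ell;k)$ is contained, up to finitely many $n$, in $\{g(n)>\ell+M\}$, whose upper density is $1-D(\ell+M)$; letting $M\to\infty$ drives this to $0$, so $W(\ell;k)$ has density $1$.

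I do not expect a genuine obstacle: once Davenport's theorem is in hand everything is elementary. The only points needing care are the repeated passages to the limit in $\varepsilon$ (respectively $M$), which rest entirely on the continuity of $D$, and the two places — the upper bound in \ref{withinperf linear2} and all of \ref{withinperf superlinear} — where one must use both that $D(u)<1$ for finite $u$ and that $D(u)\to1$, i.e.\ that $\sigma(n)/n$ is unbounded, together with the observation that the lower constraint $g(n)>\ell-k(n)/n$ becomes vacuous because $\sigma(n)/n\ge1$. The cleanest exposition is to state the interval/endpoint bookkeeping once at the outset and then apply it uniformly across the four cases.
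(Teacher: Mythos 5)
Your proof is correct and follows essentially the same route as the paper: everything reduces to Davenport's theorem by comparing the window $k(n)/n$ with constant windows and passing to the limit using continuity of $D$. The paper only writes out part \ref{withinperf superlinear}, where it bounds the lower density of $W(\l;k)$ below by $D(\l+j)$ and lets $j\to\infty$; your argument for that case is the same idea phrased dually (bounding the upper density of the complement by $1-D(\l+M)$), and your treatment of the remaining parts is the elementary sandwiching the paper leaves to the reader.
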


\begin{proof}[Sketch]
The proof is elementary and we will only indicate the details for part \ref{withinperf superlinear}.  For any $j\in\mathbb{N}$, there exists $n_{j}\in\mathbb{N}$ such that $n/k(n)< 1/j$ for any $n\ge n_{j}$. We have
\begin{align}\label{ineqcom}
\hspace{5pt} \frac{1}{x}\, \#\{n\le x: \lvert \sigma(n)-\l n \rvert <jn\} \ 
\le \ \frac{n_{j}}{x} \ + \ \frac{1}{x} \, \#\{n\le x: \lvert \sigma(n)-\l n \rvert <k(n)\}
\end{align}
whenever $x\ge n_{j}$. If $j>\ell$, the left-hand side of (\ref{ineqcom}) converges to $D(\ell+j)$ as $x\to\infty$, and hence
\begin{equation*}
	\liminf_{x\to\infty} \, \frac{\#W(\ell;k;x)}{x} \ \ge \ D(\ell+j).
\end{equation*}
The desired result follows at once by taking  $j\to\infty$, using the fact that $D(\infty)=1$. 

\end{proof}

\subsection{On Congruences involving $\sigma(n)$} \label{pom}

Henceforth, our focus shifts to the sublinear thresholds, where we crucially make use of  the techniques developed by  Pomerance and his co-authors over the years.   In 1975, Pomerance \cite{Pom75} initiated the study of the equation 
\begin{align}\label{linshift}
	\sigma(n) \ = \ \l n \ +\ k
\end{align}
with $\l$, $k$ being integers and  $\ell>1$.
Central to \cite{Pom75} is the following important concept, which proves very useful in many  Erd\"os-style problems (see \cite{PP16}):

\begin{defi}[Regular/ Sporadic]\label{regspor}
	The solutions to the congruence \ $\sigma(n) \equiv k\  (\bmod\, n)$  of the form 
	\begin{equation}
		n=pm,  \hspace{10pt}  \text{ where } \hspace{10pt}  \ p \nmid m,\hspace{5pt}  m \mid \sigma(m), \hspace{5pt}  \sigma(m) \ = \  k,
	\end{equation}
	are called \emph{regular}. All other solutions are called  \emph{sporadic}.
\end{defi}


The main observation is that sporadic solutions occur much less frequently than regular solutions. In a series of works \cite{Pom75, Pom76a, Pom76b, PS12,  APP12, PP13}, this was quantified with various degrees of precision and uniformity. We summarize the progress made in this direction. 

Let $\Spor_{k}(x)$ be the set of sporadic solutions of $\sigma(n) \equiv k \ (\bmod\  n)$ up to $x$. In \cite{Pom75}, \cite{PS12},  \cite{APP12} and \cite{PP13}, it was shown, respectively,  that  the following bounds hold as $x\to \infty$:
\begin{itemize}
	\item $\#\Spor_{k}(x)= O_{\beta, k}(x\exp(-\beta\sqrt{\log x \log\log x}))$ for \emph{fixed} $k$ and \emph{fixed} $\beta<1/\sqrt{2}$, 
	
	\item $\#\Spor_{k}(x)= O(x^{2/3+o(1)})$ \emph{uniformly} in $\lvert k \vert < x^{2/3}$, 
	
	\item  $\#\Spor_{k}(x)=O(x^{1/2+o(1)})$ \emph{uniformly} in $\lvert k \vert < x^{1/4}$, and
	
	\item $\#\Spor_{k}(x)\cap \{\sigma(n) \ \text{is odd}\}=O(|k|x^{1/4+o(1)}) $ uniformly in $0<\lvert k \vert \le x^{1/4}$.
\end{itemize}
When $k=0$, we have a much stronger bound for (\ref{linshift}) due to   Wirsing (\cite{Wi59}).


Unfortunately, the aforementioned results are not sufficient for our study of within-perfect numbers. We must consider  a more general congruence
\begin{equation}\label{eqn Pomcong}
b\sigma(n) \ \equiv \ k \ (\bmod\, n)
\end{equation}
where $b$ and  $k$ are integers with $b\ge 1$. Accordingly, the definitions of regular and sporadic solutions should be  extended as follows:

\begin{defi}\label{extReg}
Suppose $b \mid k$. Then $n$ is said to be a \emph{regular solution} to \eqref{eqn Pomcong} if 
\begin{equation}
n\ = \ pm, \hspace{10pt}\text{ where } \hspace{5pt} p\nmid m, \hspace{5pt}  m \mid b\sigma(m),  \hspace{5pt} \sigma(m) \ =\ \frac{k}{b}.
\end{equation}
All other solutions are called  \emph{sporadic}. In the case when  $b\nmid k$,  all solutions to  \eqref{eqn Pomcong} are considered \emph{sporadic}.
\end{defi}

We record the following generalization of \cite[Lemma 8]{PS12}, which will be used in the proof of Theorem \ref{withinOrder2} and may also be of independent interest. For the convenience of the readers, we include a sketch of proof of this proposition here.

\begin{prop}\label{modified}
The number of sporadic solutions $n\le x$ to the congruence  $b\sigma(n)  \equiv  k  \, (\bmod\, n)$ is  $O(b^2x^{2/3+o(1)})$ for any $x\ge 1$ and   integer $k$ satisfying $|k| < bx^{2/3}$. The implicit constants are absolute. 
\end{prop}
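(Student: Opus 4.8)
The plan is to follow Pomerance's largest-prime-factor dissection, as in \cite[Lemma 8]{PS12} and \cite{Pom75}, keeping every estimate uniform in $k$ and tracking the powers of $b$. For a sporadic $n \le x$, write $n = p^{a}m$ with $p = P(n)$ the largest prime factor and $p \nmid m$. Reducing $b\sigma(n) \equiv k \pmod n$ modulo $p$ and using $\sigma(p^{a}) = 1 + p + \cdots + p^{a} \equiv 1 \pmod p$ gives $p \mid b\sigma(m) - k$. When $a = 1$, one checks (exactly as in the paragraph preceding Definition \ref{extReg}) that the \emph{equality} $b\sigma(m) = k$ together with $m \mid b\sigma(m)$ characterises the regular solutions; hence for a sporadic $n = pm$ we have the nonvanishing $b\sigma(m) - k \ne 0$ while still $p \mid b\sigma(m) - k$. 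The prime-power case $a \ge 2$ with $p > x^{1/3}$ contributes $\le \sum_{p > x^{1/3}} x/p^{2} \ll x^{2/3}$, and the case $p \le x^{1/3}$ (of any $a$) falls under the smooth regime treated below.

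\textbf{Large prime factors ($p > x^{1/3}$).} Here $m = n/p < x^{2/3}$, so $|b\sigma(m) - k| \le b\sigma(m) + |k| \ll b\,x^{2/3 + o(1)}$ using $\sigma(m) \ll m\log\log m$ and $|k| < bx^{2/3}$. The number of prime divisors exceeding $x^{1/3}$ of this fixed nonzero integer is at most $\log|b\sigma(m) - k| / \log(x^{1/3}) = O(1 + \log b/\log x)$. Summing over the at most $x^{2/3}$ admissible $m$ bounds this contribution by $O\!\left((1 + \tfrac{\log b}{\log x})\,x^{2/3}\right)$, comfortably within $O(b^{2}x^{2/3 + o(1)})$.

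\textbf{Small prime factors ($p \le x^{1/3}$): the main obstacle.} Now $n$ is $x^{1/3}$-smooth, and since such $n \le x$ already number $\asymp x$, the full congruence must be exploited. Discarding the trivial range $n \le x^{2/3}$, I would use smoothness to factor out a divisor $d \mid n$ with $x^{1/3} < d \le x^{2/3}$ (the partial products of the prime factors of $n$, listed with multiplicity, jump by a factor $\le x^{1/3}$, so some partial product lands in this window). Since $d \mid n \mid b\sigma(n) - k$, the congruence $b\sigma(n) \equiv k \pmod d$ holds; writing $n = dv$ and isolating the coprime part, this pins $\sigma(v)$ to a single residue class modulo $d/\gcd(b\sigma(d), d)$. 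The crux is then a uniform equidistribution estimate of the form $\#\{v \le x/d : \sigma(v) \equiv c \pmod q\} \ll (x/d)\,x^{o(1)}/q + x^{o(1)}$, valid uniformly in the modulus $q$ and the class $c$; granting this, summing over $d \in (x^{1/3}, x^{2/3}]$ and over the (sparse) primes $p \mid b$ handled by the degenerate condition $p \mid k$ yields $O(b^{2}x^{2/3 + o(1)})$, with the two powers of $b$ entering through the size of $k$ and through the factors $\gcd(b, \cdot)$ produced when inverting $b$ modulo the relevant moduli.

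I expect this last step to be the genuine difficulty: everything hinges on controlling the distribution of $\sigma(v)$ in residue classes modulo $q$ \emph{uniformly} in $q$ and $c$, and on summing the attendant $\gcd(b,q)$-reductions and error terms so as to keep the final bound simultaneously polynomial in $b$ and of order $x^{2/3 + o(1)}$ in $x$.
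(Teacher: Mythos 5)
Your large-prime-factor case is essentially the paper's Case (1) and is fine: reducing modulo $p$, using sporadicity to get $b\sigma(m)-k\neq 0$ (note the small verification that $b\sigma(m)=k$ forces $m\mid b\sigma(m)$ and hence regularity: $pm \mid b\sigma(pm)-k = pk$ gives $m\mid k = b\sigma(m)$), and counting prime divisors of a fixed nonzero integer. The genuine gap is exactly where you flag it: in the smooth case your argument rests on the hypothetical uniform estimate $\#\{v\le x/d:\sigma(v)\equiv c\ (\bmod\, q)\}\ll (x/d)x^{o(1)}/q + x^{o(1)}$, uniformly in $q$ and $c$. No such result is available, and it is false in parts of the range you would need: for instance, for any fixed $d$ the set $\{v: d\mid \sigma(v)\}$ has density $1$ (almost every $v$ has a first-power prime factor $p\equiv -1\ (\bmod\, d)$, since $\sum_{p\equiv -1 (d)} 1/p$ diverges), so $\sigma(v)$ is heavily biased toward the class $c=0$, and controlling the bias uniformly for $q$ up to $x^{2/3}$ is a hard open-ended problem, not a lemma one can grant. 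Since the $x^{1/3}$-smooth $n\le x$ number $\asymp x$, this step carries the entire burden of the proof, so the proposal as written does not close.

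The paper's proof avoids equidistribution altogether by two devices you are missing. First, it takes $m$ to be a \emph{unitary} divisor of $n$ in $(x^{1/3},x^{2/3}]$ (built from full prime-power components, using that the square-full part of $n$ may be assumed $\le x^{2/3}$) rather than your divisor-with-multiplicity $d$; unitarity gives the extra congruence $\sigma(m)\mid \sigma(n)$ in addition to $m \mid b\sigma(n)-k$, so by the Chinese Remainder Theorem the \emph{value} $b\sigma(n)$ is pinned to a single residue class modulo $[m,\sigma(m)]\ge m^2/(m,\sigma(m))$. Counting admissible values of $b\sigma(n)$ in an interval of length $\ll bx\log x$ is then a trivial interval-over-modulus bound, $\le 1 + 2bx\log x/[m,\sigma(m)]$, requiring no information about how $\sigma$ distributes in residue classes; summing over $m$ is handled by Pollack's estimate $\sum_{x^{1/3}<m\le x^{2/3}} (m,\sigma(m))/m^2 \le 3x^{-1/3+o(1)}$ (eq. \eqref{polema}, from \cite[Theorem 1.3]{Pol11}), which is also the tool you would need anyway to tame your $\gcd(b\sigma(d),d)$ losses but never invoke. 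Second, one must recover $n$ from the value $b\sigma(n)$: writing $b\sigma(n)=qn+k$ with $q\ll b\log\log x$ (using $n> x^{2/3}$ and $|k|<bx^{2/3}$), each pair (value of $b\sigma(n)$, value of $q$) determines $n$, yielding $O(b^2x^{2/3+o(1)})$ — this is where the second power of $b$ actually enters, not through inverting $b$ modulo the moduli. In short: count the values of $b\sigma(n)$, not the integers $v$, and the equidistribution problem you correctly identified as the crux simply never arises.
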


	
Our intended applications take into account the uniformity of the range and the strength of the upper bound in  counting sporadic solutions. In light of Wirsing's Theorem  (\cite{Wi59}) and Remark \ref{reltoWir}, it is also desirable to maintain all implicit constants absolute but this can be somewhat subtle, see \cite[Remark 1]{APP12} and \cite{PP13}.   After a careful examination of existing strategies, the authors believe that  the approach of \cite{PS12} is the most suitable for attaining our desired generalization (e.g., $\ell$ can be rational), incorporating   all the favourable features mentioned above.  Their method  softly utilizes the unique factorization of a natural number into its square-free and square-full parts, the anatomy of unitary divisors \footnote{Note: $m$ is a unitary divisor of $n$ if $n$ has a decomposition of the form $n=mm'$, where $(m,m')=1$.}, and the following result from \cite[Theorem 1.3]{Pol11}: 
	\begin{equation}\label{polema}
		\sum_{x^{1/3}<m\le x^{2/3}}  \ \frac{(m,\sigma(m))}{m^2}  \ \le \   3x^{-1/3+o(1)}, 
	\end{equation}
	which turns out to be a nice application of  Wirsing's Theorem!
	

\begin{proof}[Proof of Proposition \ref{modified}]

We can certainly assume that $x\ge b$, otherwise the count is  trivially bounded by $b$, which is acceptable in view of the bound claimed in Proposition \ref{modified}. Let   $|k|<bx^{2/3}$. Suppose $n\le x$ is a sporadic solution to the congruence $b\sigma(n) \equiv k \,  (\bmod\, n)$. One can  simultaneously assume that $n>x^{2/3}$ and the square-full part of $n$ is  bounded by $x^{2/3}$. Indeed,  the contribution from the complement can easily seen to be $O(x^{2/3})$. We then consider the following two cases. 

\begin{enumerate}
\item Suppose $p:= P^{+}(n) >x^{1/3}$. By the assumption made, we have $n=pm$ with  $p \ \nmid \ m$ \,and\,  $m < x^{2/3}$. The congruence can be written as  $b\sigma(n) \ = \ qn+k$ for some integer $q\ge 0$. It follows that
\begin{align*}
b(p+1)\sigma(m) \ = \ b\sigma(n) \ = \ qpm+k,
\end{align*}
and
\begin{equation*}
p(b\sigma(m)-qm) \ = \ k-b\sigma(m).
\end{equation*}

If $k-b\sigma(m)=0$, then $n$ is a regular solution, which is a contradiction! So, $k-b\sigma(m) \neq 0$.  For each $m$, the number of such $p$  is $O(\log |k-b\sigma(m)|) \, = \, O(\log (bx^{2/3}\log\log x)) \, = \, O(\log bx)$ because of  $p\mid (k-b\sigma(m))$.  Therefore, the number of such $n$ is $O(x^{2/3}\log bx)= O(x^{2/3}\log x)$, which is acceptable. \newline

\item Suppose $p:= P^{+}(n) \le x^{1/3}$. Such $n$ must have a \emph{unitary divisor} $m$ in the interval $(x^{1/3},x^{2/3}]$, see \cite{PS12}.  We have $\sigma(n) \, \equiv \, 0 \ (\bmod\, \sigma(m))$ and $b\sigma(n) \, \equiv \, k \ (\bmod\, m)$. 
By the Chinese Remainder Theorem, we have $b\sigma(n) \equiv a_{n} \ (\bmod\  [m,\sigma(m)])$ for some unique $0\le a_{n}<[m,\sigma(m)]$. Given $m\in(x^{1/3},x^{2/3}]$, the number of possible values for  $b\sigma(n)$ is $\le 1+(2bx\log x)/ [m,\sigma(m)]$. 
Summing over $m\in(x^{1/3},x^{2/3}]$, the total  number of possible values of $b\sigma(n)$ is 
\begin{align*}
	\ &\le \ \sum_{x^{1/3}<m\le x^{2/3}} \ \left(\frac{2bx\log x}{[m,\sigma(m)]} \ + \ 1\right) \nonumber\\
	\ &\le \ x^{2/3} \ + \ (2bx\log x)(3x^{-1/3+o(1)}) \ \le  \ 7bx^{2/3+o(1)} 
\end{align*}
from (\ref{polema}). Moreover, the size of  $q=  (b \sigma(n)-k)/n$ is clearly
\begin{align*}
\hspace{25pt} \ \ll \ b\log\log x\ + \  \frac{|k|}{n} \ < \ b\log\log x+b \ \ll \ b\log\log x
\end{align*}
by the assumptions $n> x^{2/3}$ and $|k|<bx^{2/3}$. 
Since $b\sigma(n)=qn+k$, the number of possible values of $n$ is at most 
\begin{equation*}
    (7bx^{2/3+o(1)})(b\log\log x) \ \le \ 7b^2 x^{2/3+o(1)}. 
\end{equation*}
\end{enumerate}
The desired result follows by putting the conclusions of the two cases together.  
\end{proof}

\subsection{A simple application}


Let $k,a ,b$ be  integers such that $k\neq 0$,  $a>b\ge 1$, and  $(a,b)=1$. Denote by $S(a,b; k)$ the set of all solutions to the Diophantine equation $b\sigma(n)= an+k$ that generalizes (\ref{linshift}).  Let  $S(a,b;k; x):= S(a,b; k) \, \cap \, [1,x]$. The main result of  \cite{Pom75} is stated as follows. 
\begin{thm}\label{PoBd}
As $x\to\infty$, we have
\begin{equation}\label{eqn PoBd1}
\# S(a,1; k;x) \ \ll_{k, a} \ \frac{x}{\log x}.
\end{equation}
\end{thm}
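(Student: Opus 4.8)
The plan is to exploit the regular/sporadic dichotomy of Definition~\ref{regspor}. Every $n \in S(a,1;k)$ satisfies $\sigma(n) = an + k$, hence in particular $\sigma(n) \equiv k \pmod{n}$; so I would partition $S(a,1;k;x)$ into those $n$ that are \emph{regular} solutions of this congruence and those that are \emph{sporadic}, and bound the two classes separately. The sporadic class carries the genuine arithmetic difficulty, while the regular class turns out to be an essentially explicit family whose size is governed by the prime-counting function.

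For the sporadic solutions I would simply invoke Proposition~\ref{modified} with $b=1$. Since $a$ and $k$ are fixed, we have $|k| < x^{2/3}$ for all sufficiently large $x$, so the number of sporadic $n \le x$ is $O(x^{2/3+o(1)})$. As $x^{2/3+o(1)} = o(x/\log x)$, the sporadic solutions contribute negligibly to the bound. One could instead quote the sharper fixed-$k$ estimate $O_{\beta,k}\!\left(x\exp(-\beta\sqrt{\log x\,\log\log x})\right)$, but the polynomial bound already suffices.

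For the regular solutions, the key observation is that the equation $\sigma(n) = an + k$, having the \emph{fixed} quotient $a$, pins down the regular family to a single multiplier. Indeed, if $n = pm$ with $p \nmid m$ and $\sigma(m) = k$ is regular, then multiplicativity gives $\sigma(n) = (p+1)\sigma(m) = (p+1)k$; equating this with $an + k = apm + k$ forces $pk = apm$, that is $k = am$. Hence a regular solution can lie in $S(a,1;k)$ only when $a \mid k$ and $m_0 := k/a$ is an $a$-perfect number (so that $\sigma(m_0) = a m_0 = k$), in which case the regular solutions are precisely the numbers $n = p m_0$ with $p$ prime and $p \nmid m_0$. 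Their count up to $x$ is at most $\pi(x/m_0) \ll x/(m_0 \log x) \ll_{a,k} x/\log x$; if no such $m_0$ exists, there are no regular solutions at all.

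Combining the two estimates yields $\#S(a,1;k;x) \ll_{a,k} x/\log x$, as claimed. The substantive analytic work---controlling the sporadic solutions via \eqref{polema} and Wirsing's theorem---is already done inside Proposition~\ref{modified}; the only genuinely new point is the elementary identification of the regular family, and I expect the one place to be careful is that the regular/sporadic split must be taken with respect to the congruence $\sigma(n)\equiv k \pmod{n}$ and only afterwards intersected with the equation, so that the quotient constraint $k = am$ can be used to isolate the single multiplier $m_0 = k/a$.
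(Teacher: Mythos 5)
Your proof is correct and follows essentially the same route as the paper: the paper establishes Theorem~\ref{PoBd} through the sharper Proposition~\ref{allperf}, whose proof (specialized to $b=1$) uses exactly your regular/sporadic dichotomy, invokes Proposition~\ref{modified} for the sporadic solutions, and identifies the regular solutions as $n=p(k/a)$ with $\sigma(k/a)=k/a\cdot a$ via the same multiplicativity computation $(p+1)k = apm+k \Rightarrow k=am$. The only difference is cosmetic: the paper pushes the regular count to an asymptotic $\pi(ax/k)+O(\log|k|)$ rather than stopping at the upper bound $\ll_{a,k} x/\log x$.
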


Motivated by Pomerance's theorem, Davis-Klyve-Kraght \cite{DKK13} recently performed an extensive numerical investigation on the true size of $S(a,1; k;x)$. As a first application of Proposition \ref{modified}, we sharpen Pomerance's theorem and confirm some of the observations and speculations made in  \cite{DKK13}.  This will also serve as the base case for Theorem \ref{withinOrder2}, see Section \ref{lbmainth}.  


\begin{prop}\label{allperf}
\
\begin{enumerate}[label=(\alph*)]
\item
\label{almperf lperf}
If \ $k\ge 1$, $ab \mid k$,  and\,    $\sigma(k/a)=k/b$, then
\begin{equation}\label{eqn perfect order}
    \#S(a,b;k; x) \ \sim \ \frac{a}{k}\, \frac{x}{\log (ax/k)}
\end{equation}
as $x\to\infty$. 
\item
\label{almperf notlperf}
Otherwise, we have
\begin{equation}\label{eqn non perfect order}
    \#S(a,b;k; x) \ = \ O(b^2x^{2/3+o(1)})
\end{equation}
for any $x\ge 1$, where the implicit constants are absolute.  In particular, the bound \eqref{eqn non perfect order} is uniform in $a$. 
\end{enumerate}
\end{prop}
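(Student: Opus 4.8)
The plan is to pass from the Diophantine equation $b\sigma(n)=an+k$ to the congruence $b\sigma(n)\equiv k\ (\bmod\,n)$ studied in Proposition~\ref{modified}: every solution of the equation solves the congruence, and the equation merely pins the quotient $q=(b\sigma(n)-k)/n$ to the value $a$. The decisive preliminary computation is to determine which \emph{regular} solutions (Definition~\ref{extReg}) of the congruence actually solve the equation. If $n=pm$ is regular, then $p\nmid m$, $m\mid b\sigma(m)$ and $\sigma(m)=k/b$, so $\sigma(n)=(p+1)\sigma(m)$ and the equation becomes $(p+1)k=apm+k$, i.e. $m=k/a$. Hence a regular solution of the equation can exist only when $k/a\in\mathbb{N}$ and $\sigma(k/a)=k/b$; since $(a,b)=1$ this is equivalent to the hypotheses $k\ge 1$, $ab\mid k$, $\sigma(k/a)=k/b$ of part~\ref{almperf lperf} (and the constraint $m\mid b\sigma(m)$ is then automatic, as $b\sigma(m)=k$ and $(k/a)\mid k$). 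In that case the regular solutions of the equation are precisely the integers $n=p\cdot(k/a)$ with $p$ prime and $p\nmid k/a$, a one-line substitution confirming the converse.

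For part~\ref{almperf lperf} I would count these regular solutions directly. Those with $n\le x$ correspond to primes $p\le ax/k$ with $p\nmid k/a$, so up to the finitely many primes dividing $k/a$ their number is $\pi(ax/k)$, and the Prime Number Theorem gives
\[
\pi(ax/k)\ \sim\ \frac{ax/k}{\log(ax/k)}\ =\ \frac{a}{k}\,\frac{x}{\log(ax/k)}.
\]
All remaining members of $S(a,b;k;x)$ are sporadic solutions of the congruence; as $a,b,k$ are fixed we have $|k|<bx^{2/3}$ for all large $x$, so Proposition~\ref{modified} bounds their number by $O(b^2x^{2/3+o(1)})=o(x/\log x)$. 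Summing the two contributions yields the asymptotic \eqref{eqn perfect order}.

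For part~\ref{almperf notlperf} the hypotheses of part~\ref{almperf lperf} fail, so the forced data $m=k/a\in\mathbb{N}$, $\sigma(k/a)=k/b$ cannot all be met; by the preliminary computation \emph{no} regular solution of the equation exists (when $b\nmid k$ this is already built into Definition~\ref{extReg}). Therefore $S(a,b;k;x)$ is contained in the set of sporadic solutions of $b\sigma(n)\equiv k\ (\bmod\,n)$, and Proposition~\ref{modified}, applied in its range $|k|<bx^{2/3}$, gives $\#S(a,b;k;x)=O(b^2x^{2/3+o(1)})$. Because neither this bound nor Proposition~\ref{modified} involves $a$, the estimate is uniform in $a$.

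I expect the main difficulty to be organizational rather than analytic: one must keep the regular/sporadic dichotomy attached to the \emph{congruence} while counting solutions of the \emph{equation}, and check both that imposing $q=a$ produces no regular solutions beyond $n=p(k/a)$ and that it does not disturb the negligibility of the sporadic ones. The quantitative heart of the argument — the uniform sporadic bound $O(b^2x^{2/3+o(1)})$ — is supplied by Proposition~\ref{modified}, so once the correspondence above is set up, part~\ref{almperf lperf} is an application of the Prime Number Theorem and part~\ref{almperf notlperf} is immediate.
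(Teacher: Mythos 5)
Your proposal is correct and follows essentially the same route as the paper's own proof: both reduce the equation $b\sigma(n)=an+k$ to the congruence $b\sigma(n)\equiv k\ (\bmod\, n)$, show that a regular solution forces $b\sigma(m)=k=am$ (hence $m=k/a$, $ab\mid k$, $\sigma(k/a)=k/b$, so the regular solutions are exactly $n=p(k/a)$ with $p\nmid k/a$, counted as $\pi(ax/k)+O(\log k)$ via the Prime Number Theorem), and bound the sporadic solutions by Proposition \ref{modified}. Your explicit verification that $m\mid b\sigma(m)$ is automatic and that the equation pins the quotient $q$ to $a$ is a slightly more careful write-up of exactly the computation the paper performs.
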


\begin{proof}[Proof]
Suppose $n\in S(a,b; k)$. Then $b\sigma(n)\equiv k\, (\bmod\, n)$. If $b\nmid k$, then all solutions are sporadic by Definition \ref{extReg}  and   \eqref{eqn non perfect order} follows at once from Proposition \ref{modified}. 

Suppose $b\mid k$ and $n$ is a regular solution. Then $k(1+p) =  b(1+p) \sigma(m) =  apm+k$. Hence,  $ b\sigma(m) = k = am$ and in particular, we have
\begin{align}\label{Pomconc}
    a\mid k \hspace{10pt} \text{ and }  \hspace{10pt} \sigma(k/a) \ = \ k/b \hspace{5pt}   (= \  \sigma(m)).  
\end{align}
In other words, if  (\ref{Pomconc}) is violated, then all solutions are  sporadic and  once again  \eqref{eqn non perfect order} holds.  This proves part \ref{almperf notlperf} of Proposition \ref{allperf}.

Suppose  $ab \mid k$ and $\sigma(k/a) \ = \ k/b$.  Then the  set $ \{n\in \mathbb{N}: n=p(k/a), \, p\nmid (k/a)\}$
consists of all regular solutions and is contained in $S(a,b; k)$.  Using Proposition \ref{modified}, the Prime Number Theorem, and the bound $\omega(n) = O(\log n)$, it follows that 
\begin{align*}
\#S(a,b; k;x) \ &=  \  \pi(ax/k) \ + \  O(\log |k|) \ + \  O(b^2x^{2/3+o(1)})  \nonumber\\
\ &\sim \  \frac{a}{k}\frac{x}{\log (ax/k)}
\end{align*}
as $x\to \infty$.  Hence, part \ref{almperf lperf} follows and this completes the proof of Proposition \ref{allperf}. 
\end{proof}

\section{Proof of Theorem \ref{withinOrder2}}

\subsection{Upper Bound}

We begin by proving the harder part of Theorem \ref{withinOrder2}, i.e., the upper bounds for $W(\ell; k;x)$. Fix $c\in(0,1/3)$. Let $k_{c}(y):= y^{c}$ and $k(y)\le k_{c}(y)$  for  $y\ge 1$.  Given  a function $f$, we write $\widetilde{W}(\l;f;x) := \{n\le x: |\sigma(n)-\l n|<f(x)\}$. The following inequality is apparent: 
\begin{equation}\label{eqn ubbineq}
\#W(\l;k;x) \ \le \  \#\widetilde{W}(\l;k;x) \ \le \  \# \widetilde{W}(\l; k_{c};x). 
\end{equation}


Let $\ell= a/b>1 $ be in the lowest term. For $n\in \widetilde{W}(\ell; k_{c}; x)$, we have  \, $b\sigma(n)-an=k$  \, for some integer $k$ such that $|k|<bx^{c}$.  In particular, we have
\begin{equation}\label{ourPomcong}
b\sigma(n) \, \equiv\,  k \ (\bmod\, n) \hspace{10pt}  \text{ for } \hspace{10pt} |k|<bx^{c}.
\end{equation}

By Proposition \ref{modified}, the number of $n\in \widetilde{W}(\ell; k_{c} ; x)$ which is a sporadic solution (see Definition \ref{extReg}) is  $O(bx^{c})\, \cdot\,  O(b^2x^{2/3+o(1)})=O(b^3x^{2/3+c+o(1)})$, which is acceptable  in view of Theorem \ref{withinOrder2}. On the other hand,  the number of regular solutions $n=pm\in \widetilde{W}(\ell; k_{c};x)$  with $p\le bx^{c}$ is $O(bx^{c+o(1)})$ by Wirsing's Theorem (\cite{Wi59} or see Section \ref{introd}). The contribution is clearly negligibly small. 

Suppose $p>bx^{c}$ and $b\sigma(m)=rm$ \ for some integer $r\ge a+1$. Then
\begin{align*}
\hspace{10pt} |b\sigma(n)-an|&=|b(1+p)\sigma(m)-apm|=|(1+p)rm-apm|=m|r+p(r-a)|\nonumber\\
&\ge m(r+p) > bx^{c}.
\end{align*}
This contradicts with (\ref{ourPomcong})!

Suppose $p>bx^{c}$ and $b\sigma(m)=rm$ \ for some integer $r\le a-1$. Then
\begin{itemize}
    \item If  $r+p(r-a)\ge 0$ (which implies $p<a$), then a contradiction with $p>bx^{c}$ arises  whenever $x^{c}>\ell$. 

    \item If $r+p(r-a)<0$, then $|b\sigma(n)-an|< bx^{c}\iff m[(a-r)p-r]<bx^{c}$. By Merten's Theorem, the  number of such $n$ is at most 
\begin{align*}
\sum_{2\le r \le a-1} \sum_{bx^{c}<p\le x} \frac{bx^{c}}{(a-r)p-r} \ & \ \le abx^{c} \sum_{bx^{c}<p\le x} \frac{1}{p-(a-1)} \nonumber\\
&\le 2abx^{c} \,\sum_{p\le x} \frac{1}{p} \ \ll \ abx^{c}\log\log x
\end{align*}
whenever $x^{c}\ge 2\ell$. The contribution is once again negligible. 

\end{itemize}

 Hence, we are left to consider the case when $r=a$, i.e.,  
 \begin{align}\label{siftremain}
     n \ = \ pm \ \in \  \widetilde{W}(\ell; k_{c};x) \hspace{10pt} \text{ such that}   \hspace{10pt}  p \ > \ bx^{c}  \hspace{5pt} \text{ and }  \hspace{5pt}  \sigma(m) \ = \ \ell m.
 \end{align}
If $\ell\not\in \Sigma$, there is clearly no such $n$ and thus
$\#W(\ell;k;x) =  O(ab^3x^{2/3+c+o(1)})$
by taking into account the paragraphs right above. This proves part \ref{mainbdd} of Theorem \ref{withinOrder2}. 

Suppose $\ell\in \Sigma$. Firstly, observe from partial summation and Wirsing's Theorem that
\begin{align}\label{parbdd}
	\sum_{\substack{\sigma(m) \, = \, \l m}} \, \frac{\log m}{m} \ = \ \int_{1}^{\infty} \frac{\log t}{t} \, dP_{\ell}(t) \ = \  \lim_{t\to \infty} \,  \frac{\log t}{t^{1-o(1)}}\, + \, \int_{1}^{\infty} \, \frac{\log t}{t^{2-o(1)}} \ dt\ \ll \  1,
\end{align}
where $P_{\ell}(t):=\#\{m\le t: \sigma(m) \, = \, \l m\}$. As a result, both of the series
\begin{equation}\label{conv}
	\sum_{\sigma(m) \, = \, \l m}  \, \frac{\log m}{m} \hspace{10pt} \text{ and  } \hspace{10pt}  \sum_{\sigma(m) \, = \, \l m}\,  \frac{1}{m}
\end{equation}
are readily seen to be convergent.  Notice that the bound (\ref{parbdd}) is uniform in\, $\ell$. 

Secondly, we have $m<x^{1-c}$\, since \ $x\ge n=pm> x^{c}m$. Then 
\begin{equation*}
    0\ < \ \frac{\log m}{\log x} \ \le \  1-c \ < \ 1,
\end{equation*}
and 
\begin{equation}\label{GPexp}
    \bigg(1-\frac{\log m}{\log x}\bigg)^{-1} \ = \ 1+O_{c}\bigg(\frac{\log m}{\log x}\bigg).
\end{equation}
Let $\beta>1$ be given. The Prime Number Theorem implies  the existence of a constant $X_{0}=X_{0}(\beta)>0$ such that  $\pi(x)<\beta x/\log x$ whenever $x\ge X_{0}$. For $x\ge X_{0}^{1/c}$, the number of $n$ satisfying (\ref{siftremain}) is at most
\begin{align*}
    \sum_{\substack{\sigma(m)=\ell m\\ m\le x^{1-c}}} \pi (x/m) \ &< \ \beta\ \sum_{\substack{\sigma(m)=\ell m\\ m\le x^{1-c}}}\frac{x/m}{\log (x/m)}\nonumber\\
    \ &< \  \frac{\beta x}{\log x} \sum_{\sigma(m)=\l m} \frac{1}{m} \, + \,  O_{c}\bigg(\frac{\beta x}{(\log x)^2}\sum_{\substack{\sigma(m)=\l m}}\frac{\log m}{m}\bigg)\nonumber\\
    \ &<  \ \frac{\beta x}{\log x} \sum_{\sigma(m)=\l m} \frac{1}{m} \, + \,  O_{c}\bigg(\frac{\beta x}{(\log x)^2}\bigg)
\end{align*}
with the help of (\ref{GPexp}) and the convergence of the series in  (\ref{conv}).  Therefore, we have
\begin{equation}\label{eqn upperlim}
    \limsup_{x\to\infty} \, \frac{\#W(\l;k;x)}{x/\log x} \ \le\  \beta \sum_{\sigma(m)=\l m} \frac{1}{m}
\end{equation}
for any $\beta >1$. Let $\beta \to 1+$ in \eqref{eqn upperlim}, it follows that
\begin{equation*}
    \limsup_{x\to\infty}\frac{\# W(\l;k;x)}{x/\log x} \ \le \ \sum_{\sigma(m)=\l m} \frac{1}{m}.
\end{equation*}
This proves the upper bound of Theorem  \ref{withinOrder2}.\ref{mainasymp}.


\subsection{Lower Bound}\label{lbmainth}

 The proof of the lower bound for  Theorem \ref{withinOrder2}.\ref{mainasymp} is  relatively straight-forward.  Suppose $\ell\in \Sigma$. Based on the experience of Pomerance et. al. (see  Section \ref{pom}), a lower bound for  $\# W(\ell; k; x)$ can be obtained by estimating the size of the set 
\begin{align}
\mathcal{L}_{\ell}(\delta) \ := \ 	\left\{ n\le x: \, n \, = \, pm, \   p\nmid m, \ \sigma(m) = \ell m, \ \ell m \, < \, (pm)^{\delta}  \right\}
\end{align}
provided that  $k(y)\ge y^{\delta}$ for any $y\ge 1$, where $\delta\in (0,1)$.  We have
\begin{align*}
	\# \mathcal{L}_{\ell}(\delta) \ = \  \sum_{\substack{\sigma(m)=\ell m \\ m\le x^{\delta}/\ell}} \ \sum_{\substack{(\ell m)^{1/\delta}/m < p \le x/m \\ p\, \nmid\,  m}} \, 1. 
\end{align*}

Using the bound $\omega(m)= O(\log m)$, Wirsing's theorem and partial summation, it follows that
\begin{align*}
		\# \mathcal{L}_{\ell}(\delta) \ = \  \sum_{\substack{\sigma(m)=\ell m \\ m\le x^{\delta}/\ell}} \ \sum_{\substack{(\ell m)^{1/\delta}/m < p \le x/m }} \, 1 \  \ + \ \   O_{\delta}(x^{o(1)}),
\end{align*}
and 
\begin{align*}
	\sum_{\substack{\sigma(m)=\ell m \\ m\le x^{\delta}/\ell}} \ \sum_{\substack{ p \le  (\ell m)^{1/\delta}/m  }} \, 1  \ \ll \   \ell^{1/\delta}	\sum_{\substack{\sigma(m)=\ell m \\ m\le x^{\delta}}}  \ m^{1/\delta-1} \ \ll \  \ell^{1/\delta}	 x^{1-\delta+o(1)}. 
\end{align*}
Hence, 
\begin{align}
	\# \mathcal{L}_{\ell}(\delta) \ = \  \sum_{\substack{\sigma(m)=\ell m \\ m\le x^{\delta}/\ell}} \ \pi(x/m) \  \ + \ \   O_{\delta}\left(\ell^{1/\delta}	 x^{1-\delta+o(1)}\right). 
\end{align}

Let $\alpha<1$. By the Prime Number Theorem, there exists $x_{0}=x_{0}(\alpha)>0$ such that  $\pi(x)>\alpha x/\log x$ whenever $x\ge x_{0}$.  Thus, if $x>(x_{0})^{1/(1-\delta)}$, then
	\begin{align*}
		\# \mathcal{L}_{\ell}(\delta) \ &>  \  \frac{\alpha x}{\log x} \sum_{\substack{\sigma(m)=\ell m \\ m\le x^{\delta}/\ell}} \  \frac{1}{m} \  \ + \ \   O_{\delta}\left(\ell^{1/\delta}	 x^{1-\delta+o(1)}\right) \nonumber\\
		\ &= \  \frac{\alpha x}{\log x} \left( \sum_{\substack{\sigma(m)=\ell m }} \  \frac{1}{m}  \, + \, O\left( (\ell /x ^{\delta})^{1+o(1)}\right)\right) \  +  \   O_{\delta}\left(\ell^{1/\delta}	 x^{1-\delta+o(1)}\right). 
	\end{align*}
	From this, we may deduce that
	\begin{align}
	\liminf_{x\to \infty} \ 	\frac{\# W(\ell; k; x)}{x/\log x} \ > \  \alpha \sum_{\substack{\sigma(m)=\ell m }} \  \frac{1}{m}.  
	\end{align}
	Since this holds for any $\alpha < 1$, the lower bound for  Theorem \ref{withinOrder2}.\ref{mainasymp} follows.

\begin{rek}\label{constcon}
	Suppose $\ell \in \Sigma$.   Proposition \ref{allperf} implies that  when $k\equiv k_{0} \ge 1$ is a \emph{constant} function,  we have
	\begin{align}\label{consasymp}
		\frac{\# W(\ell; k_{0}; x)}{x/\log x} \ &= \ \sum_{|k|< bk_{0}} \, \frac{\#S(a, b;k;x)}{x/\log x} \ \sim \ \  \sum_{\substack{0< m< k_{0}/\ell \\ b\mid m \\ \sigma(m)=\ell m}} \  \frac{1}{m}
	\end{align}
	as $x\to \infty$. The rightmost quantity of (\ref{consasymp}) converges to $\sum\limits_{\substack{\sigma(m)=\l m \\ b\mid m}} \, 1/m$ as $k_{0} \to \infty$.  In particular,  when $\ell \in \Z $ and $k$  is an  increasing unbounded function, one readily observes that
	\begin{align}\label{onlyintasmp}
		\frac{\# W(\ell; k; x)}{x/\log x}  \ \sim \  \lim_{k_{0}\to \infty} \ 	\frac{\# W(\ell; k_{0}; x)}{x/\log x}
	\end{align}
	as $x\to \infty$. However, the asymptotic (\ref{onlyintasmp}) is not necessarily true when $\ell \not \in \Z$ because of the restriction $b\mid m $ present in (\ref{consasymp})!

\end{rek}

\section{Concluding Discussions, Numerics,  \& Further Directions}

Building upon the method of  \cite{APP12},  Pollack-Pomerance-Thompson \cite{PPT18} recently proved a variant of the main result of \cite{APP12}, albeit with a weaker error term and uniformity. Specifically, they proved that  if $\ell \in \Z$ is \emph{kept fixed}, the number of sporadic solutions to the \emph{equation} $\sigma(n) = \ell n+k$ up to $x$ is  $O(x^{3/5+o_{\ell}(1)})$ as $x\to \infty$ and for any integer $k$. To the best of the authors' knowledge,  there seems to be a number of subtleties in generalizing the method of \cite{PPT18} to the equation $b\sigma(n)= an+k$.  Additionally, as noted  in \cite{APP12}, it appears that obtaining an estimate that is fully uniform in all of  $a, b, k$  would require considerable effort. 

If  $\ell$ is restricted to be an \emph{integer}  and  is kept \emph{fixed}, the same argument from  Theorem \ref{withinOrder2} along with the main theorem of \cite{PPT18}, should yield the slightly improved admissible range  $c\in (0, 2/5)$. However,  the barrier of our method seems to be $c\in (0,1/2)$, see   \cite[Conjecture 4.3]{PPT18}.  When $\ell$ is \emph{not} an integer, it is unclear what the barrier should be and likely somewhat smaller than $(0,1/2)$.  




Theorem  \ref{withinOrder2} leads to  several interesting consequences which are stated as follows. As usual,  $k:[1, \infty) \rightarrow (0,\infty)$ is an increasing function such that
\begin{align}\label{bddcond}
\hspace{20pt} 	y^{\delta} \ \le \ k(y) \ \le \ y^{c} \hspace{20pt} \text{ for } \hspace{20pt} y \ \ge \ 1. 
\end{align}
 Firstly, it is natural to consider the following quantity
\begin{equation}
	\mathcal{D}_{c}(\ell) \ := \ \lim_{x\to\infty} \ \frac{\#W(\ell;y^{c};x)}{x/\log x} \ =: \   \lim_{x\to\infty} \  \mathcal{D}_{c}(\ell;x)
\end{equation}
for  $\ell \in [1,\infty)$ and $c\in (0,1)$. In view of Proposition \ref{phase}, this new `distribution function'  is arguably well-suited to study the within-perfect numbers with respect to the sublinear threshold.  However, this distribution function behaves quite differently from the ones described in Definition \ref{rigdistr}.


\begin{prop}\label{discont}
	The function  $\ell \mapsto \mathcal{D}_{c}(\ell )$ is discontinuous on a dense subset of\,  $[1,\infty)$, for any $c\in(0,1/3)$. 
\end{prop}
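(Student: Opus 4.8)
The plan is to show discontinuity by exhibiting, near each point of a dense set, a jump in $\mathcal{D}_c(\ell)$. The key structural fact from Theorem \ref{withinOrder2} is the dichotomy: when $\ell \in \Sigma$ we have $\mathcal{D}_c(\ell) = \sum_{\sigma(m)=\ell m} 1/m > 0$, whereas when $\ell \in \mathbb{Q} \setminus \Sigma$ (and more generally when $\ell \notin \Sigma$) the count $\#W(\ell;y^c;x)$ is $O(x^{2/3+c+o(1)})$, which is $o(x/\log x)$ precisely because $c < 1/3$; hence $\mathcal{D}_c(\ell) = 0$. So $\mathcal{D}_c$ takes a strictly positive value exactly on $\Sigma$ and vanishes off $\Sigma$. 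The whole problem therefore reduces to a purely set-theoretic statement about $\Sigma = \{\sigma(m)/m : m \ge 1\}$ inside $[1,\infty)$.

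First I would pin down the relevant topological features of $\Sigma$. The essential points are: (i) $\Sigma$ is a nonempty subset of $(1,\infty)$ containing values arbitrarily close to any prescribed abundancy ratio, and crucially (ii) $\Sigma$ is \emph{dense} in $[1,\infty)$. Density follows from Davenport's theorem quoted in the excerpt: $\sigma(n)/n$ has a continuous, strictly increasing distribution function on $[1,\infty)$, so its range of values $\Sigma$ must be dense (a continuous strictly increasing $D$ forces the attained ratios to fill $[1,\infty)$ densely, since gaps in $\Sigma$ would create intervals of constancy in the empirical distribution, contradicting strict monotonicity). I would also record that $\Sigma \cap (1,\infty)$ has empty interior, or at least that its complement is dense; this is immediate since $\Sigma$ is countable (it is indexed by $m \in \mathbb{N}$), so $[1,\infty) \setminus \Sigma$ is a dense (co-countable) set.

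With these facts in hand the argument is short. Let $\ell_0 \in \Sigma$ be arbitrary; then $\mathcal{D}_c(\ell_0) = \sum_{\sigma(m)=\ell_0 m} 1/m > 0$ since the sum contains at least one term $1/m_0$ with $\sigma(m_0)/m_0 = \ell_0$. Because $\Sigma$ is countable, the complement $[1,\infty)\setminus\Sigma$ is dense, so there is a sequence $\ell_j \to \ell_0$ with each $\ell_j \notin \Sigma$, giving $\mathcal{D}_c(\ell_j) = 0$ for all $j$ while $\mathcal{D}_c(\ell_0) > 0$. Hence $\mathcal{D}_c$ is discontinuous at every point of $\Sigma$. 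Finally, $\Sigma$ itself is dense in $[1,\infty)$ by Davenport's theorem, so the set of discontinuities contains the dense set $\Sigma$, which is exactly the claim. (One may phrase the conclusion as: $\mathcal{D}_c$ is discontinuous on all of $\Sigma$, a dense subset.)

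The one step requiring genuine care is the density of $\Sigma$, so I would make that the technical heart of the write-up. The cleanest route is to invoke Davenport: since $D(u) = \lim_{x\to\infty} \frac1x \#\{n \le x : \sigma(n)/n \le u\}$ is continuous and strictly increasing on $[1,\infty)$, for any $u_0$ and any $\epsilon > 0$ the interval $(u_0 - \epsilon, u_0 + \epsilon)$ satisfies $D(u_0+\epsilon) - D(u_0 - \epsilon) > 0$, which forces a positive density of $n$ with $\sigma(n)/n$ in that interval; in particular such $n$ exist, so $\Sigma$ meets every such interval and is dense. An alternative self-contained route avoids the distribution function entirely: using $\sigma(p)/p = 1 + 1/p \to 1^+$ and multiplicativity one can build ratios $\prod (1 + 1/p_i + \cdots)$ approximating any target in $[1,\infty)$ from the divergence of $\sum 1/p$, but citing Davenport is more economical and already in the paper. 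The other ingredients (positivity of the sum on $\Sigma$, countability of $\Sigma$, the vanishing of $\mathcal{D}_c$ off $\Sigma$ via $2/3 + c < 1$) are immediate from Theorem \ref{withinOrder2} and need only be stated.
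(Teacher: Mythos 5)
Your overall skeleton (positive value on $\Sigma$, zero on a dense set of nearby points, $\Sigma$ itself dense) is the same as the paper's, and your Davenport-based argument for the density of $\Sigma$ is a legitimate substitute for the paper's citation. But there is a genuine gap in how you produce the dense set of zeros. Theorem \ref{withinOrder2}\ref{mainbdd} gives the bound $O(ab^3x^{2/3+c+o(1)})$ only for \emph{rational} $\ell = a/b \notin \Sigma$; the bound's very form requires the numerator and denominator of $\ell$, and the paper proves nothing about $\#W(\ell;y^c;x)$ for irrational $\ell$ --- indeed it explicitly lists the irrational case as an open problem, and its Proposition on $\Sigma$-approximable numbers shows that for suitable irrational $\ell \notin \Sigma$ one has $\#W(\ell;k;x) \gg x/\log x$ on an unbounded set of $x$. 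So your parenthetical ``and more generally when $\ell \notin \Sigma$'' is not merely unjustified: the vanishing of $\mathcal{D}_c$ off $\Sigma$ (or even the existence of the defining limit there) can fail. This breaks your approximating sequence: you obtain $\ell_j \to \ell_0$ with $\ell_j \notin \Sigma$ from countability of $\Sigma$ alone, but such $\ell_j$ will in general be irrational, and then $\mathcal{D}_c(\ell_j)=0$ is exactly what you cannot assert.

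The repair is to choose the $\ell_j$ \emph{rational} and outside $\Sigma$, which requires the density of $(\mathbb{Q}\cap[1,\infty))\setminus\Sigma$ in $[1,\infty)$. This does not follow from countability either: $\Sigma$ is itself a countable dense subset of $\mathbb{Q}\cap[1,\infty)$, and removing one countable dense set from another can destroy density. The paper supplies this ingredient via a theorem of Anderson (cited from Pollack's book, p.~270), which is precisely the step your write-up is missing. With that substitution --- $\mathcal{D}_c>0$ on $\Sigma$ by Theorem \ref{withinOrder2}\ref{mainasymp}, $\mathcal{D}_c=0$ on the Anderson-dense set of rational non-abundancy ratios by Theorem \ref{withinOrder2}\ref{mainbdd} together with $2/3+c<1$, and $\Sigma$ dense --- your argument closes and coincides with the paper's proof.
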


\begin{proof}
	It follows from a theorem of Anderson (see \cite[pp. 270]{Pol}) that $(\Q\cap[1,\infty))\setminus \Sigma$ is dense in $[1,\infty)$. Observe that $\mathcal{D}_{c}$ takes the value $0$ on $(\Q\cap[1,\infty))\setminus \Sigma$ but it takes positive values on $\Sigma$ by Theorem \ref{withinOrder2}. So, $\mathcal{D}_{c}$ is discontinuous on $\Sigma$. It is a well-known theorem that $\Sigma$ is again dense in $[1,\infty)$ (see \cite[pp. 275]{Pol}). This completes the proof. 
\end{proof}

Secondly, a real number  $\ell>1$ is said to be \emph{$\Sigma$-approximable}  if there exists  a function $f(x)\to \infty$  and a sequence of positive integers $(m_{i})_{i\ge 1}$ such that $	|\ell -  \sigma(m_{i})/m_{i}| < 1/f(m_{i})$ for any $i\ge 1$.  It is clear that

\begin{prop}
If $\ell>1$ is  $\Sigma$-approximable, then	$\#W(\ell; k; x) \gg x/\log x$  on an unbounded set of $x$.  
\end{prop}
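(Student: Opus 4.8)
The plan is to mimic the lower-bound construction for Theorem \ref{withinOrder2}.\ref{mainasymp}, manufacturing within-perfect numbers of the shape $n=pm_i$ from the approximating sequence $(m_i)$. The engine is the elementary identity: for $n=pm$ with $p\nmid m$, writing $E:=\sigma(m)-\ell m$ and using $\sigma(pm)=(p+1)\sigma(m)$,
\[
\sigma(n)-\ell n \ = \ (p+1)\sigma(m)-\ell pm \ = \ pE+\sigma(m),
\]
so $n=pm$ is $(\ell;k)$-within-perfect exactly when $|pE+\sigma(m)|<k(pm)$. By $\Sigma$-approximability I have integers $m=m_i$ (which I may take with $m_i\to\infty$) whose errors $E_i=\sigma(m_i)-\ell m_i$ satisfy $|E_i|<m_i/f(m_i)$.

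If $\ell\in\Sigma$ the claim is immediate: take each $m_i$ to be $\ell$-perfect, so $E_i=0$ and the condition collapses to $\sigma(m_i)<k(pm_i)$, which holds for all sufficiently large $p$ since $k(y)\ge y^{\delta}$. This is exactly the regular-solution family of Theorem \ref{withinOrder2}.\ref{mainasymp}, already yielding $\#W(\ell;k;x)\gg x/\log x$ for all large $x$. So the substance lies in $\ell\notin\Sigma$, where $E_i\neq 0$. Passing to a subsequence I may assume the $E_i$ share a sign; the productive case is $E_i<0$ (approximation \emph{from below}), for which $p\mapsto pE_i+\sigma(m_i)$ vanishes at $p_0^{(i)}:=\sigma(m_i)/|E_i|\asymp \ell m_i/|E_i|$ and $|pE_i+\sigma(m_i)|=|E_i|\,|p-p_0^{(i)}|$. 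The within-perfect condition then becomes the window inequality $|p-p_0^{(i)}|<k(pm_i)/|E_i|$, and the integers so produced satisfy $n\asymp p_0^{(i)}m_i=:x_i\asymp \ell m_i^2/|E_i|\to\infty$, which will supply the unbounded set of thresholds.

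What remains is to count the primes $p$ in the window $(p_0^{(i)}-R_i,\,p_0^{(i)}+R_i)$ with $R_i=k(x_i)/|E_i|$; this count is $\asymp R_i/\log x_i=k(x_i)/(|E_i|\log x_i)$, and it reaches the target $\gg x_i/\log x_i$ precisely when $k(x_i)\gg|E_i|x_i\asymp \ell m_i^2$. Hence the values $x=x_i\to\infty$ furnish the required unbounded set as soon as the approximation is good enough that $k(x_i)\gg m_i^2$—for instance once $f(m_i)$ outgrows a fixed power of $m_i$—which is the quantitative content that the hypothesis of $\Sigma$-approximability is meant to encode. The hard part, I expect, is this prime count rather than the algebra: since $R_i$ is typically far smaller than $p_0^{(i)}$, the Prime Number Theorem cannot be applied on a full range and one must invoke an unconditional short-interval estimate (available once $R_i\ge (p_0^{(i)})^{0.525+o(1)}$), while simultaneously securing that the approximation rate forces $k(x_i)\gg m_i^2$. (The case $E_i>0$ must be set aside, as it confines $p$ to a bounded range and so contributes negligibly; one therefore genuinely needs the $m_i$ to approach $\ell$ from below.) Controlling these two points is where the real weight sits.
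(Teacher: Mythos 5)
Your engine is the right one---it is exactly the construction the paper has in mind when it dismisses this proposition with ``It is clear that'': transplant the regular-solution lower bound of Section \ref{lbmainth}, take $n=p m_i$ with $p\nmid m_i$, and use the identity $\sigma(n)-\ell n = pE_i+\sigma(m_i)$. Your algebra (the window $|p-p_0^{(i)}|<k(pm_i)/|E_i|$ around $p_0^{(i)}=\sigma(m_i)/|E_i|$ when $E_i<0$) is correct. But the decisive counting step fails for a structural reason that no short-interval prime theorem or faster-growing $f$ can repair: every $n$ you produce at scale $x_i$ is a multiple of the \emph{single} modulus $m_i$, so the number of them up to $x_i$ is capped by $\pi(x_i/m_i)+O(\log m_i)\ll x_i/(m_i\log x_i)$. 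Your criterion ``the count $\asymp R_i/\log x_i$ reaches $\gg x_i/\log x_i$ precisely when $R_i\gg x_i$, i.e.\ $k(x_i)\gg|E_i|x_i$'' ignores this cap: the window is centred at $p_0^{(i)}=x_i/m_i$, so demanding half-length $R_i\gg x_i=m_i\,p_0^{(i)}$ forces the window to consist mostly of primes $p>x_i/m_i$, and those give $n=pm_i>x_i$, outside $W(\ell;k;x_i)$; enlarging the cutoff to $X_i=(p_0^{(i)}+R_i)m_i$ only inflates the target to $X_i/\log X_i\gg R_im_i/\log X_i$, again a factor $m_i$ beyond what the window can hold. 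Since $m_i\to\infty$ is forced whenever $\ell\notin\Sigma$ (a fixed $m$ with $E\neq 0$ admits only a bounded range of admissible $p$, because $k(y)\le y^c$), your scheme can never deliver a constant uniform in $i$, no matter how good the approximation. This also makes the Baker--Harman--Pintz $0.525$ input a red herring relative to the paper: the only usable windows are of length comparable to (or containing) the full initial segment $p\le p_0^{(i)}$, where Chebyshev and the Prime Number Theorem suffice, which is all the paper's Section \ref{lbmainth} computation uses.

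Two further points of comparison. First, your sign-splitting is spurious: for $E_i>0$ the sufficient inequality $pE_i+\sigma(m_i)<(pm_i)^{\delta}\le k(pm_i)$ holds on a full initial segment $p\ll (m_i^{\delta}/E_i)^{1/(1-\delta)}$, whose length is unbounded in $i$ once $f$ grows fast enough; the paper's argument is sign-agnostic, bounding $|pE+\sigma(m)|\le p|E|+\sigma(m)$, so approximation from above is just as serviceable as from below, and your claim that one ``genuinely needs'' approach from below is incorrect. Second, and more consequentially: the paper supplies no proof beyond the one-liner, and what its intended argument actually yields at the special scales $x_i$ is the diluted bound $\#W(\ell;k;x_i)\gg x_i/(m_i\log x_i)$ (note that the uniform-constant reading of the proposition cannot hold for all $\Sigma$-approximable $\ell$ as literally defined, since rational $\ell\notin\Sigma$ satisfy the definition yet have $\#W(\ell;k;x)=O(x^{2/3+c+o(1)})$ by Theorem \ref{withinOrder2}\ref{mainbdd}). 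Your attempt to recover the full $x/\log x$ by lengthening the window is precisely the step that cannot be executed, so as written the proposal does not prove the stated bound---and indeed you flag the two ``hard points'' as open, the first of which is unattainable within this single-modulus construction.
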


 In fact, for any function $f(x)\to \infty$, there are \emph{irrational} numbers $\ell>1$ that are $\Sigma$-approximable by $f$.  This follows from the standard nested interval argument and the theorems of Anderson used in the proof of Proposition \ref{discont}.

We conclude this article with some numerics and  open problems for further investigation. 

 We calculate the quotient of $D_{c}(2;x)$ for various  values of $c\in (0,1)$ and  at $x = 1,000,000$, $x = 10,000,000$, and $x = 20,000,000$. Note: $\sum\limits_{\sigma(m)=2m} \frac{1}{m}\approx 0.2045$.

\begin{table}[h]
    \centering
    \begin{tabular}{c|c|c|c}
       $k(y)$  & $x = 1,000,000$ & $x = 10,000,000$ & $x = 20,000,000$ \\ \hline
       $y^{0.9}$ & 3.661860 & 3.305180 & 3.196040 \\
       $y^{0.8}$ & 1.141480 & 0.945623 & 0.908751 \\
       $y^{0.7}$ & 0.494278 & 0.435395 & 0.426470 \\
       $y^{0.6}$ & 0.311567 & 0.274586 & 0.267904 \\
       $y^{0.5}$ & 0.276559 & 0.259482 & 0.255962 \\
       $y^{0.4}$ & 0.264968 & 0.252956 & 0.250063 \\
       $y^{0.3}$ & 0.225980 & 0.247837 & 0.247299 \\
       $y^{0.2}$ & 0.151238 & 0.195911 & 0.197430 \\

    \end{tabular}
    \vspace{3mm}
    \caption{$\mathcal{D}_{c}(2;x)$ for various values of $x$ and $c$.}
    \label{tab:data_table}
\end{table}

It is natural to ask the following. 

\begin{prob}
	When $c\in (1/2,1)$, does  the correct order of magnitude for $\#W(\ell ;k;x)$, with $\ell\in \Sigma$ and $k$ satisfying (\ref{bddcond}),   continue to be $x/\log x$ as $x\to \infty$?

\end{prob}

In between the sublinear and linear regime, e.g., $k(y)=y/\log y$, Proposition \ref{phase} gives no conclusion. Consider the plot of $x\mapsto \#W(2; k; x)/(x/\log x)$ for such   $k$ from $x=2$ to $x=10,000$: 

\begin{figure}[h]
	\centering
	\includegraphics{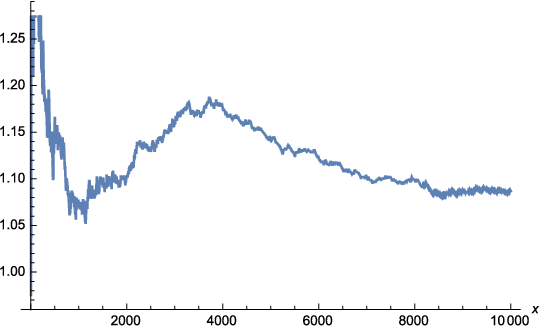}
	\caption{This plot shows the quantity $\#W(2;k;x)/(x/\log x)$
		with $k(y) = y/\log y$ for $x = 2$ to $10,000$.}
	\label{fig: WithinPerfxlogx}
\end{figure}

\begin{prob}
	What is the order of magnitude of $\#W(\ell;k;x)$ if the function $k$ satisfies $y^{c}=o(k(y))$ \emph{for any} $c\in(0,1)$?
\end{prob}

\begin{prob}
What is the order of magnitude for $\#W(\l;k;x)$ for \emph{irrational} $\l$? 
\end{prob}

\begin{prob}
Determine the  set of points of continuity for the distribution function $\ell \mapsto \mathcal{D}_{c}(\ell)$. 
\end{prob}



\bigskip

\end{document}